\newcommand{\N}{\mathbb{N}}
\newtheorem{theorem}{Theorem}
\newtheorem{corollary}{Corollary}
\newtheorem{lemma}{Lemma}
\theoremstyle{remark}
\newtheorem{remark}{Remark}
\begin{document}

\title[Subclasses of Analytic and Univalent Functions]{\large New Subclasses of Analytic and Univalent Functions Involving Certain Convolution Operators}

\author[K. O. Babalola]{K. O. BABALOLA}

\begin{abstract}
Let $E$ be the open unit disk $\{z\in \mathbb{C}: |z|<1\}$. Let
$A$ be the class of analytic functions in $E$, which have the form
$f(z)=z+a_2z^2+...$. We define operators $L_n^\sigma\colon A\to A$
using the convolution $*$. Using these operators, we define and
study new classes of functions in the unit disk. Moreover, we
obtain some basic properties of the new classes, namely inclusion,
growth, covering, distortion, closure under certain integral
transformation and coefficient inequalities.
\end{abstract}



\maketitle

\section{Introduction}
Denote by $A$ the class of functions
\[
f(z)=z+a_2z^2+...
\]
which are analytic in $E$. Let $P$ be the class of functions
\begin{equation}
 p(z)=1+c_1z+c_2z^2+...\, \label{1}
\end{equation}
which are also analytic in the unit disk $E$ and satisfy
$Re\;p(z)>0$, $z\in E$. Furthermore, for $0\le\beta<1$, let
$P(\beta)$ denote the subclasses of $P$ consisting of analytic
functions of the form $p_{\beta}(z)=\beta+(1-\beta)p(z)$, $p\in
P$.

It is well known  that a function $f\in A$ is said to belong to
the class $S_0(\beta)$ if $f(z)/z\in P(\beta)$, and is said to be
of bounded turning of order $\beta$ if $f'(z)\in P(\beta)$. The
class of functions of bounded turning of order $\beta$ is denoted
by $R(\beta)$ and is known to consist only of univalent functions
in the unit disk. These classes of functions were studied in the
literatures \cite{THM, KY} and various generalizations of them
have appeared in \cite{SA, BO, GS, TOO}.

Let $g(z)=z+b_2z^2+... \in A$. The convolution (or Hadamard
product) of $f$ and $g$ (written as $f*g$) is defined as
\[
(f*g)(z)=z+\sum_{k=2}^\infty a_kb_kz^k.
\]
Let $\sigma$ be a fixed real number and $n\in \N$. Define
\[
\tau_{\sigma,n}(z)=\frac{z}{(1-z)^{\sigma
-(n-1)}},\;\;\;\sigma-(n-1)>0
\]
and $\tau_{\sigma,n}^{(-1)}$ such that
\[
(\tau_{\sigma,n}*\tau_{\sigma,n}^{(-1)})(z)=\frac{z}{1-z}.
\]
For $n=0$, we simply write $\tau_\sigma$ and $\tau_\sigma^{(-1)}$
respectively. Let $f\in A$, define the operator $D^\sigma\colon
A\to A$ by
\[
D^\sigma f(z)=(\tau_\sigma*f)(z).
\]
The operator $D^\sigma$ is called the Ruscheweyh derivative
\cite{SR}. Analoguos to $D^\sigma$, Noor \cite{KIN} defined the
integral operator $I_\sigma\colon A\to A$ by
\[
I_\sigma f(z)=(\tau_\sigma^{(-1)}*f)(z).
\]
The operators $D^\sigma$ and $I_\sigma$ have been used to define
several classes of functions (see \cite{SA, BO, GS, TOO, SR,
GSS}). We define the following operators.

\medskip

 {\sc Definition 1.} Let $f\in A$. We define the operators
$L_n^\sigma:A\rightarrow A$ as follows:
\[
L_n^\sigma f(z)=(\tau_\sigma*\tau_{\sigma,n}^{(-1)}*f)(z).
\]

\medskip

 {\sc Definition 2.} Let $f\in A$. We define the operators
$l_n^\sigma:A\rightarrow A$ as follows:
\[
l_n^\sigma f(z)=(\tau_{\sigma}^{(-1)}*\tau_{\sigma,n}*f)(z).
\]

Note that $L_0^\sigma f(z)=L_0^0 f(z)=f(z)$, $L_1^1
f(z)=zf^{\prime}(z)$. Furthermore $L_n^n f(z)=D^n f(z)$ and
$L_{-n}^0 f(z)=I_n f(z)$. Similarly, $l_0^\sigma f(z)=l_0^0
f(z)=f(z)$, $l_1^1 f(z)=zf^{\prime}(z)$, $l_n^n f(z)=I_n f(z)$ and
$l_{-n}^0 f(z)=D^n
f(z)$. We also have the following remark.\\
\begin{remark}
Let $f\in A$. Then
\[
L_n^\sigma (l_n^\sigma f(z))=l_n^\sigma (L_n^\sigma f(z))=f(z).
\]
\end{remark}
In the case $\sigma=n$ we write $L_nf(z)(=D^nf(z))$ instead of
$L_n^nf(z)$ and $l_nf(z)(=I_nf(z))$ instead of $l_n^nf(z)$.

Next we isolate new classes of functions by:
\medskip

 {\sc Definition 3.} Let $f\in A$. Let $\sigma$ be any fixed real
number satisfying $\sigma-(n-1)>0$ for $n\in \mathbb{N}$. Then for
$0\le\beta<1$, a function $f\in A$ is said to be in the class
$B_n^\sigma (\beta)$ if and only if
\begin{equation}
Re\frac{L_n^\sigma f(z)}{z}>\beta,\;\;\;z\in E\,. \label{2}
\end{equation}
If $\sigma=n$ we write $B_n(\beta)$ in place of $B_n^\sigma
(\beta)$. We also note the following equivalent classes of
functions: $B_0(\beta)\equiv S_0(\beta)$ and $B_1(\beta)\equiv
R(\beta)$. In \cite{GS}, Goel and Sohi defined classes
$M_n(\beta)$ as consisting of functions $f\in A$ satisfying
\[
Re\frac{D^{n+1}f(z)}{z}>\beta,\;\;\;z\in E.
\]
These classes coincide with $B_{n+1}^\sigma (\beta)$. Following
from the geometric condition ~(\ref{2}) and Remark 1.3, functions
in the classes $B_n^\sigma (\beta)$ can be represented in terms of
functions in $P(\beta)$ as
\[
f(z)=l_n^\sigma [zp_\beta (z)].
\]
We investigate the classes $B_n^\sigma (\beta)$ in Section 3.
However, we require some preliminary discussions and results,
which we present in the next section.

\section{Two-parameter integral iteration of the class {\em P}}
In \cite{BO}, the authors identified the following iterated
integral transformation of functions in the class $P$.

\medskip

 {\sc Definition 4.}(\cite{BO}) Let $p\in P$ and $\alpha>0$ be real. The
{\em nth} iterated integral transform of $p(z), z\in E$ is defined
as
\[
p_n(z)=\frac{\alpha}{z^\alpha}\int_0^zt^{\alpha-1}p_{n-1}(t)dt,\;\;\;
n\geq 1
\]
with $p_0(z)=p(z)$.

The transformation, denoted by $P_n$, arose from the study of
classes $T_n^\alpha (\beta)$ consisting of functions defined by
the geometric condition
$Re\{D^nf(z)^\alpha/\alpha^nz^\alpha\}>\beta$, where $\alpha>0$ is
real, $0\le\beta<1$, and $D^n\;(n\in \N)$ is the Salagean
derivative operator defined as $D^0f(z)=f(z)$ and
$D^nf(z)=z[D^{n-1)}f(z)]'$ (see \cite{SA, BO, TOO}); and was
applied successfully in providing elegant proofs of many results.
It is known that for each $n\geq 1$, the class $T_n^\alpha
(\beta)$ consists only of univalent functions in the unit disk. A
basic relationship between the classes $P_n$ and $T_n^\alpha
(\beta)$ was given by the following lemma.
\begin{lemma}(\cite{BO}) Let $f\in A$, and $\alpha$, $\beta$ and
$D^n$ as defined above. Then the following are equivalent:

{\rm(i)} $f\in T_n^\alpha (\beta)$,

{\rm(ii)}$(D^nf(z)^\alpha/\alpha^nz^\alpha-\beta)/(1-\beta)\in P$,

{\rm(iii)} $(f(z)^\alpha/z^\alpha-\beta)/(1-\beta)\in P_n$.
\end{lemma}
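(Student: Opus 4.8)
The plan is to treat the two equivalences separately, the first being essentially definitional and the second resting on an operator-inversion identity. Throughout I write $q(z)=D^nf(z)^\alpha/(\alpha^nz^\alpha)$ and $h(z)=f(z)^\alpha/z^\alpha$, where the branch is normalized so that $(f(z)/z)^\alpha\to 1$ as $z\to 0$.

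First I would dispose of (i) $\Leftrightarrow$ (ii). The normalization gives $h(z)=1+\alpha a_2z+\cdots$, and since the Salagean operator $D$ multiplies the coefficient of $z^{\alpha+k}$ in $f^\alpha$ by $(\alpha+k)$, one checks directly that $q(0)=1$. Because $1-\beta>0$, the inequality $Re\,q(z)>\beta$ is equivalent to $Re\{(q(z)-\beta)/(1-\beta)\}>0$; combined with $q(0)=1$, so that $(q-\beta)/(1-\beta)$ has constant term $1$, this is exactly the assertion that $(q-\beta)/(1-\beta)\in P$. Hence (i) and (ii) are equivalent.

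The substance is (ii) $\Leftrightarrow$ (iii). Let $J$ denote the one-step transform $Ju(z)=(\alpha/z^\alpha)\int_0^z t^{\alpha-1}u(t)\,dt$, so that the $n$th iterate in Definition 4 is $J^n$ and $P_n=\{J^np:p\in P\}$. Differentiating $z^\alpha(Ju)(z)=\alpha\int_0^z t^{\alpha-1}u(t)\,dt$ shows that $J$ is invertible with $J^{-1}u=u+(z/\alpha)u'$. The key computation is the operator identity
\[
\frac{D^nf(z)^\alpha}{\alpha^nz^\alpha}=J^{-n}\!\left(\frac{f(z)^\alpha}{z^\alpha}\right).
\]
I would prove this by first establishing the case $n=1$: setting $G=f^\alpha$ and $g=G/z^\alpha$, a direct computation gives $DG/(\alpha z^\alpha)=g+(z/\alpha)g'=J^{-1}g$. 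Since this one-step identity $DG/(\alpha z^\alpha)=J^{-1}(G/z^\alpha)$ holds for any analytic $G$, I then induct on $n$, applying it with $G=D^{n-1}f^\alpha$ and using the linearity of $J^{-1}$ to pull out the factor $\alpha^{-(n-1)}$.

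With $q=J^{-n}h$ in hand, the final step is bookkeeping. Since $J(1)=1$, the operators $J^n$ and $J^{-n}$ fix constants and are linear, so $q=\beta+(1-\beta)p$ holds for some $p$ if and only if $h=J^nq=\beta+(1-\beta)J^np$. Reading this with the hypothesis of (ii), namely $p\in P$, yields $(h-\beta)/(1-\beta)=J^np\in P_n$, which is (iii); reading it in reverse, starting from $(h-\beta)/(1-\beta)=J^np\in P_n$ and applying $J^{-n}$, returns (ii). I expect the main obstacle to be the operator identity displayed above: one must pin down how $D$ acts on the non-normalized function $f^\alpha=z^\alpha(1+\cdots)$ and verify that the normalization by $\alpha^nz^\alpha$ makes the Salagean operator precisely the inverse of the $n$-fold integral transform. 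Once that identity is secured, both equivalences follow from linearity, the fact that both operators fix constants, and $1-\beta>0$.
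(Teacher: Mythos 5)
Your proposal is correct, but it is worth noting that the paper offers no proof of this lemma at all: it is imported verbatim from \cite{BO}, and the only argument in the paper you can compare against is the proof of the analogous Lemma 2 for the classes $B_n^\sigma(\beta)$. That proof proceeds by coefficient bookkeeping: it writes $L_n^\sigma f(z)=z[\beta+(1-\beta)p(z)]$ as a power series, applies the convolution inverse $l_n^\sigma$ termwise to convert the coefficients $c_k$ into $c_{n,k}^\sigma=[\sigma]_{n/k}c_k$, and recognizes the resulting series as an element of $P_n^\sigma$. Your route is structurally different and, to my mind, cleaner: you prove the operator identity $D^nf(z)^\alpha/(\alpha^nz^\alpha)=J^{-n}\bigl(f(z)^\alpha/z^\alpha\bigr)$ directly, where $J^{-1}u=u+(z/\alpha)u'$ is obtained by differentiating $z^\alpha(Ju)(z)=\alpha\int_0^zt^{\alpha-1}u(t)\,dt$, and then conclude from linearity and the fact that $J$ fixes constants. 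The one-step computation $DG/(\alpha z^\alpha)=g+(z/\alpha)g'$ for $G=z^\alpha g$ and the induction are both sound, and the final transfer of the decomposition $q=\beta+(1-\beta)p$ through $J^{\pm n}$ is exactly what is needed since $P_n$ is by definition $J^n$ applied to $P$. What your approach buys is that it makes the inverse relationship between the Salagean-type operator and the iterated integral transform explicit as a differential identity, rather than verifying it multiplier-by-multiplier on Taylor coefficients; the paper's coefficient method, by contrast, simultaneously produces the explicit coefficient formula (the analogue of equation~(\ref{4})) that is then reused for the coefficient bounds and distortion theorems later in the paper.
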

Analogous to $P_n$ we define the following two-parameter
integral iteration of a $p\in P$.\\
\medskip

 {\sc Definition 5.} Let $p\in P$. Let $\sigma$ be any fixed real
number such that $\sigma-(n-1)>0$ for $n\in \mathbb{N}$. We define
the {\em sigma-nth} integral iteration of $p(z)$, $z\in E$ as
\begin{equation}
p_{\sigma,n}(z)=\frac{\sigma-(n-1)}{z^{\sigma-(n-1)}}\int_0^zt^{\sigma-n}p_{\sigma,n-1}(t)dt,\;\;\;
n\geq 1\, \label{3}
\end{equation}
with $p_{\sigma,0}(z)=p(z)$.

We note that since $p_{\sigma,0}(z)$ belongs to $P$, the transform
$p_{\sigma,n}(z)$ is analytic, and $p_{\sigma,n}(0)=1$ and
$p_{\sigma,n}(z)\neq 0$ for $z\in E$. We denote the family of
iterations above by $P_n^\sigma$. With $p(z)$ given by ~(\ref{1})
it is easily verified that
\[
p_{\sigma,n}(z)=1+\sum_{k=1}^\infty c_{n,k}^\sigma z^k
\]
where
\begin{equation}
c_{n,k}^\sigma=\frac{\sigma
(\sigma-1)...(\sigma-(n-1))}{(\sigma+k)(\sigma+k-1)...(\sigma+k-(n-1))}c_k,\;\;\;
k\geq 1\,. \label{4}
\end{equation}
Observe that the multiplier of $c_k$ in ~(\ref{4}) can be written
in factorial form as:
\[
\frac{\sigma
(\sigma-1)...(\sigma-(n-1))}{(\sigma+k)(\sigma+k-1)...(\sigma+k-(n-1))}=\frac{\sigma
!}{(\sigma+k)!}\frac{(\sigma+k-n)!}{(\sigma-n)!},\;\;\; k\geq 1.
\]
If also, as it is well known, $(\sigma)_n$ stands for the
Pochhammer symbol defined by
\[
(\sigma)_n=\frac{\Gamma(\sigma+n)}{\Gamma(\sigma)}= \left\{
\begin{array}{ll}
1&\mbox{if $n=0$},\\
\sigma(\sigma+1)...(\sigma+n-1)&\mbox{if $n\geq 1$.}
\end{array}
\right.
\]
then we can write the multiplier as
$(\sigma-(n-1))_n/(\sigma+k-(n-1))_n$ and throughout this paper we
represent this fraction by $[\sigma]_{n/k}$. Thus we have
\begin{equation}
c_{n,k}^\sigma=\frac{(\sigma-(n-1))_n}{(\sigma+k-(n-1))_n}c_k=[\sigma]_{n/k}c_k\,
\label{5}
\end{equation}
with $[\sigma]_{0/k}=1$. By setting
$p_{\sigma,0}(z)=L_0(z)=(1+z)/(1-z)$ we see easily that the {\em
sigma-nth} integral iteration of the Mobius functions is
\begin{equation}
L_{\sigma,n}(z)=\frac{\sigma-(n-1)}{z^{\sigma-(n-1)}}\int_0^zt^{\sigma-n}L_{\sigma,n-1}(t)dt,\;\;\;
n\geq 1\,. \label{6}
\end{equation}
The function $L_{\sigma,n}(z)$ will play a cental role in the
family $P_n^\sigma$ similar to the role of the Mobius function
$L_0(z)$ in the family $P$.
Now from ~(\ref{5}) and the fact that
$|c_k|\leq 2$ (Caratheodory lemma), we have the following
inequality
\begin{equation}
|c_{n,k}^\sigma|\leq 2[\sigma]_{n/k},\;\;\; k\geq 1\, \label{7}
\end{equation}
with equality if and only if $p_{\sigma,n}(z)=L_{\sigma,n}(z)$
given by ~(\ref{6}).
\begin{remark}
From Definitions 4 and 5 we note that $P_1^\sigma=P_1$.
\end{remark}
The following results characterizing the family $P_n^\sigma$ can
be obtained {\em mutatis mutandis} as in Section 2 of \cite{BO},
thus we omit the proofs.
\begin{theorem}
Let $\gamma\neq 1$ be a nonnegative real number. Then for any
fixed $\sigma$ and each $n\geq 1$
\[
Re\;p_{\sigma,n-1}(z)>\gamma\Rightarrow
Re\;p_{\sigma,n}(z)>\gamma,\;\;\; 0\leq\gamma<1,
\]
and
\[
Re\;p_{\sigma,n-1}(z)<\gamma\Rightarrow
Re\;p_{\sigma,n}(z)<\gamma,\;\;\; \gamma>1.
\]
\end{theorem}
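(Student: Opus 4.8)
The plan is to recognize that the iteration in Definition~5 is nothing but a weighted averaging operator, so that the half-plane condition, being a convex constraint, is automatically inherited. Write $a=\sigma-(n-1)$, which is positive by the standing hypothesis $\sigma-(n-1)>0$. Substituting $t=sz$ with $s\in[0,1]$ in the integral defining $p_{\sigma,n}$ in~(\ref{3}), and using $dt=z\,ds$ together with $t^{a-1}=s^{a-1}z^{a-1}$, the factor $z^{-a}$ cancels and one obtains the representation
\[
p_{\sigma,n}(z)=a\int_0^1 s^{a-1}\,p_{\sigma,n-1}(sz)\,ds,
\]
valid for every $z\in E$. Since $a>0$, the density $a s^{a-1}$ is positive for $s\in(0,1]$ and $\int_0^1 a s^{a-1}\,ds=1$; thus $p_{\sigma,n}(z)$ is a genuine convex average of the values $p_{\sigma,n-1}(sz)$, $0\le s\le 1$.

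Next I would fix $z\in E$ and take real parts. For every $s\in[0,1]$ the point $sz$ again lies in $E$, so the hypothesis $Re\,p_{\sigma,n-1}(w)>\gamma$ (for all $w\in E$) gives $Re\,p_{\sigma,n-1}(sz)>\gamma$. Subtracting $\gamma$ and using $\int_0^1 a s^{a-1}\,ds=1$ yields the identity
\[
Re\,p_{\sigma,n}(z)-\gamma=a\int_0^1 s^{a-1}\bigl(Re\,p_{\sigma,n-1}(sz)-\gamma\bigr)\,ds.
\]
In the range $0\le\gamma<1$ the integrand is continuous and strictly positive for $s\in(0,1]$, and the weight $a s^{a-1}$ is positive there, so the integral is strictly positive; hence $Re\,p_{\sigma,n}(z)>\gamma$. (At $z=0$ one has $p_{\sigma,n}(0)=1>\gamma$, so nothing is lost.) The second implication is entirely symmetric: when $\gamma>1$ and $Re\,p_{\sigma,n-1}(w)<\gamma$ on $E$, the same identity shows the integrand is strictly negative on $(0,1]$, forcing $Re\,p_{\sigma,n}(z)<\gamma$.

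I do not expect a serious obstacle here; the only points needing care are the justification of the change of variables, namely that the integral in~(\ref{3}) is taken along the straight segment from $0$ to $z$ (so that $t=sz$ is legitimate and no branch ambiguity in $t^{a-1}$ arises), and the passage from a weak to a strict inequality, which rests on the positivity of the weight $a s^{a-1}$ on $(0,1]$ together with continuity of the integrand. An alternative route, matching the treatment in~\cite{BO}, is to differentiate~(\ref{3}) to obtain the first-order relation $p_{\sigma,n-1}(z)=p_{\sigma,n}(z)+\tfrac{z}{a}\,p_{\sigma,n}'(z)$ and then invoke a standard differential-subordination (Jack/Miller--Mocanu type) lemma; this also makes transparent why the borderline value $\gamma=1$ must be excluded, since there the averaging argument produces only equality and the preservation statement degenerates.
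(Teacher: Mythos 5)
Your argument is correct. Note first that the paper itself offers no proof of this theorem: it states that the results characterizing $P_n^\sigma$ ``can be obtained \emph{mutatis mutandis} as in Section 2 of \cite{BO}'' and omits the proofs, so there is no in-paper argument to compare against; your proof is a complete, self-contained substitute. The radial change of variables $t=sz$ in~(\ref{3}) (along the segment from $0$ to $z$, which also fixes the branch of $t^{\sigma-n}$ consistently with the normalization $p_{\sigma,n}(0)=1$) gives
\[
p_{\sigma,n}(z)=a\int_0^1 s^{a-1}\,p_{\sigma,n-1}(sz)\,ds,\qquad a=\sigma-(n-1)>0,
\]
and since $as^{a-1}$ integrates to $1$ on $[0,1]$, both half-plane conditions are inherited by averaging; strictness follows from continuity of the integrand and positivity of the weight on $(0,1]$, exactly as you say. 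In fact your argument proves slightly more than is stated: the averaging step preserves either strict inequality for \emph{every} real $\gamma$, and the case split $0\le\gamma<1$ versus $\gamma>1$ with $\gamma\neq 1$ excluded serves only to keep the hypotheses from being vacuous, since $p_{\sigma,n-1}(0)=1$. The alternative you sketch --- differentiating~(\ref{3}) to obtain $a\,p_{\sigma,n-1}(z)=a\,p_{\sigma,n}(z)+z\,p_{\sigma,n}'(z)$ and invoking a Jack/Miller--Mocanu type lemma --- is closer in spirit to the treatment in \cite{BO} that the author defers to; either presentation is acceptable, but the integral-mean version has the advantage of being elementary and of not requiring any external subordination lemma.
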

\begin{corollary}
$P_n^\sigma\subset P$, $n\geq 1$.
\end{corollary}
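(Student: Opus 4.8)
The plan is to deduce the corollary from Theorem 1 by a short induction on $n$, specializing the free parameter $\gamma$ to the value $0$. By Definition 5 a function lies in $P_n^\sigma$ precisely when it is the sigma-nth iteration $p_{\sigma,n}$ of some $p\in P$, while membership in $P$ is certified by the single positivity condition $\mathrm{Re}\,p_{\sigma,n}(z)>0$ for $z\in E$ (the analyticity and the normalization $p_{\sigma,n}(0)=1$ having already been recorded in the paragraph after Definition 5). So it suffices to show $\mathrm{Re}\,p_{\sigma,n}(z)>0$ for every $n\geq 1$ and every starting function $p\in P$.

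First I would fix $p\in P$ and run the induction on $n$. For the base case $n=1$, note that $p_{\sigma,0}=p\in P$ gives $\mathrm{Re}\,p_{\sigma,0}(z)>0$; applying the first implication of Theorem 1 with $\gamma=0$, which is admissible since $0$ is nonnegative, $0\neq 1$, and $0\le 0<1$, yields $\mathrm{Re}\,p_{\sigma,1}(z)>0$, that is, $p_{\sigma,1}\in P$. For the inductive step, assume $\mathrm{Re}\,p_{\sigma,n-1}(z)>0$ for some $n\geq 2$. Invoking Theorem 1 again with $\gamma=0$ on the consecutive pair $(p_{\sigma,n-1},p_{\sigma,n})$ immediately delivers $\mathrm{Re}\,p_{\sigma,n}(z)>0$, so $p_{\sigma,n}\in P$. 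Since $p\in P$ was arbitrary, this closes the induction and establishes $P_n^\sigma\subset P$ for all $n\geq 1$.

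There is essentially no hard step in this argument: the entire analytic content has been absorbed into Theorem 1, and the corollary is just the observation that $\gamma=0$ is a legitimate choice of the nonnegative parameter satisfying $0\le\gamma<1$, so that the ``$>\gamma$'' half of the theorem propagates positivity through the iteration. Only the second implication of the theorem, governing the regime $\gamma>1$, plays no role here. The one point deserving a moment's care is to confirm that the structural hypotheses for $P$-membership, namely analyticity on $E$ and the value $1$ at the origin, already hold for each $p_{\sigma,n}$ by the remarks following Definition 5, so that the single inequality $\mathrm{Re}\,p_{\sigma,n}(z)>0$ genuinely certifies $p_{\sigma,n}\in P$.
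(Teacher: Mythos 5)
Your proof is correct and is exactly the intended derivation: the paper omits the argument (deferring to the analogous Section~2 of \cite{BO}), but the corollary is meant to follow from Theorem~1 with $\gamma=0$ propagated inductively from $p_{\sigma,0}=p\in P$, which is precisely what you do. Your added care about analyticity and the normalization $p_{\sigma,n}(0)=1$ matches the remarks the paper records after Definition~5.
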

\begin{theorem}
$P_{n+1}^\sigma\subset P_n^\sigma$, $n\geq 1$.
\end{theorem}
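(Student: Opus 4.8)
The plan is to read off the inclusion from the coefficient description~(\ref{5}) of $P_n^\sigma$, using the single fact that an integral transform of the type in Definition~5 with positive real parameter carries $P$ into $P$. By~(\ref{5}), a function lies in $P_n^\sigma$ exactly when it equals $1+\sum_{k\geq 1}[\sigma]_{n/k}c_k z^k$ for some $1+\sum_{k\geq 1}c_k z^k\in P$. So I start with an arbitrary $q\in P_{n+1}^\sigma$, written $q(z)=1+\sum_{k\geq 1}[\sigma]_{(n+1)/k}c_k z^k$ with underlying $p(z)=1+\sum_{k\geq 1}c_k z^k\in P$, and try to exhibit $q$ as the $n$th iterate of some $\tilde p\in P$; by~(\ref{5}) this amounts to finding $\tilde p=1+\sum \tilde c_k z^k\in P$ with $[\sigma]_{n/k}\tilde c_k=[\sigma]_{(n+1)/k}c_k$ for every $k$.

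The decisive step is the ratio $[\sigma]_{(n+1)/k}/[\sigma]_{n/k}$. Writing each multiplier as a product, $[\sigma]_{n/k}=\prod_{i=1}^{n}(\sigma-n+i)/(\sigma+k-n+i)$ and $[\sigma]_{(n+1)/k}=\prod_{i=0}^{n}(\sigma-n+i)/(\sigma+k-n+i)$, all factors cancel except the $i=0$ term, leaving $[\sigma]_{(n+1)/k}/[\sigma]_{n/k}=(\sigma-n)/(\sigma+k-n)$. Hence the required coefficients are $\tilde c_k=\frac{\sigma-n}{\sigma+k-n}c_k$, and I recognise the function $1+\sum\tilde c_k z^k$ as precisely one further integral transform of $p$, namely $\tilde p(z)=\frac{\sigma-n}{z^{\sigma-n}}\int_0^z t^{\sigma-n-1}p(t)\,dt$, with parameter $\alpha=\sigma-n$.

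It then remains to verify $\tilde p\in P$, and here the hypotheses cooperate: membership of $q$ in $P_{n+1}^\sigma$ forces $\sigma-((n+1)-1)>0$, i.e. $\alpha=\sigma-n>0$. The substitution $t=zu$ rewrites $\tilde p(z)=\alpha\int_0^1 u^{\alpha-1}p(zu)\,du$, displaying $\tilde p(z)$ as the average of the values $p(zu)$, $u\in(0,1]$, against the probability density $\alpha u^{\alpha-1}$ on $[0,1]$; since $Re\,p(zu)>0$ for each such $u$, we get $Re\,\tilde p(z)>0$, so $\tilde p\in P$ (this is the $\gamma=0$ instance of the reasoning behind Theorem~2.2). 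By construction the iterate $\tilde p_{\sigma,n}$ has $k$th coefficient $[\sigma]_{n/k}\tilde c_k=[\sigma]_{(n+1)/k}c_k$, hence equals $q$; therefore $q\in P_n^\sigma$ and $P_{n+1}^\sigma\subset P_n^\sigma$. I expect the main obstacle to be the ratio computation: the whole argument succeeds only because the quotient of multipliers collapses to the single factor $(\sigma-n)/(\sigma+k-n)$, which is what identifies $\tilde p$ as an admissible, $P$-preserving integral transform of $p$ rather than a function that might leave $P$; confirming $\sigma-n>0$ is the accompanying point that makes that transform legitimate.
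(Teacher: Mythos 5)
Your proof is correct and complete. The paper omits this proof entirely (deferring ``mutatis mutandis'' to the single-parameter case in \cite{BO}), and your argument supplies precisely the adaptation that is genuinely needed there: because the exponent $\sigma-(j-1)$ changes from one iteration step to the next, $p_{\sigma,n+1}$ is \emph{not} literally the $n$th iterate of $p_{\sigma,1}$, so one must, as you do, exploit the commutativity of the diagonal coefficient multipliers to shift the extra factor $(\sigma-n)/(\sigma+k-n)$ onto the seed function, recognise the result as the transform $\tilde p(z)=\frac{\sigma-n}{z^{\sigma-n}}\int_0^z t^{\sigma-n-1}p(t)\,dt$ of $p$, and verify $\tilde p\in P$ by the averaging argument, which is exactly where the standing hypothesis $\sigma-n>0$ enters.
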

\begin{theorem}
Let $p_{\sigma,n}\in P_n^\sigma$. Then
\[
{\rm(a)} |p_{\sigma,n}(z)|\leq 1+2\sum_{k=1}^\infty
[\sigma]_{n/k}r^k,\;\;\;|z|=r,
\]
\[
{\rm(b)} Re\;p_{\sigma,n}(z)\geq 1+2\sum_{k=1}^\infty
[\sigma]_{n/k}(-r)^k,\;\;\;|z|=r.
\]
The results are sharp for the function
$p_{\sigma,n}(z)=L_{\sigma,n}(z)$ in the upper bound and
$p_{\sigma,n}(z)=L_{\sigma,n}(-z)$ in the lower bound.
\end{theorem}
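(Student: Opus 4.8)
The plan is to treat the two parts separately: part (a) is a one-line consequence of the coefficient estimate, whereas part (b) carries the real content and will be handled by induction on $n$.

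For (a) I would combine the series representation $p_{\sigma,n}(z)=1+\sum_{k\ge1}c_{n,k}^\sigma z^k$ with the triangle inequality and the bound~(\ref{7}): for $|z|=r$,
\[
|p_{\sigma,n}(z)|\le 1+\sum_{k=1}^\infty|c_{n,k}^\sigma|r^k\le 1+2\sum_{k=1}^\infty[\sigma]_{n/k}r^k .
\]
Since $\sigma-(n-1)>0$ forces every $[\sigma]_{n/k}>0$, all terms of $L_{\sigma,n}(r)=1+2\sum[\sigma]_{n/k}r^k$ are positive reals, so equality holds at $z=r$ for $p_{\sigma,n}=L_{\sigma,n}$, which is the claimed sharpness.

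For (b) the naive triangle inequality only gives $Re\,p_{\sigma,n}(z)\ge 1-2\sum[\sigma]_{n/k}r^k$, and this is strictly weaker on the even-indexed terms than the asserted value $1+2\sum[\sigma]_{n/k}(-r)^k=L_{\sigma,n}(-r)$; so a sharper argument is needed. My plan is to prove the pointwise statement $Re\,p_{\sigma,n}(z)\ge L_{\sigma,n}(-|z|)$ for every $p\in P$ and $z\in E$, by induction on $n$. The device is to rewrite the iteration~(\ref{3}) as an averaging operator: substituting $t=zs$ (and verifying the identity coefficientwise, to sidestep branch issues with $z^{\sigma-(n-1)}$) turns~(\ref{3}) into
\[
p_{\sigma,n}(z)=(\sigma-(n-1))\int_0^1 s^{\sigma-n}\,p_{\sigma,n-1}(zs)\,ds,
\]
where the weight $(\sigma-(n-1))s^{\sigma-n}$ is nonnegative and integrable on $[0,1]$ because $\sigma-n>-1$. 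The same identity applied to $L_{\sigma,n}$ at the point $z=-r$ gives $L_{\sigma,n}(-r)=(\sigma-(n-1))\int_0^1 s^{\sigma-n}L_{\sigma,n-1}(-rs)\,ds$. The base case $n=0$ is exactly the classical sharp lower bound $Re\,p(z)\ge(1-r)/(1+r)=L_0(-r)=L_{\sigma,0}(-r)$ for the class $P$.

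The inductive step then reads, for $|z|=r$,
\[
Re\,p_{\sigma,n}(z)=(\sigma-(n-1))\int_0^1 s^{\sigma-n}\,Re\,p_{\sigma,n-1}(zs)\,ds,
\]
and since the kernel is nonnegative I may insert the induction hypothesis $Re\,p_{\sigma,n-1}(zs)\ge L_{\sigma,n-1}(-rs)$ pointwise in $s$; the resulting integral is precisely $L_{\sigma,n}(-r)$ by the identity just recorded. Taking $z=r$ with $p_{\sigma,n}=L_{\sigma,n}(-\,\cdot\,)$, i.e. the underlying $p=L_0(-\,\cdot\,)$ a rotation of the Möbius function, attains equality and furnishes the sharpness. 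I expect the main obstacle to be exactly the point where the triangle inequality fails, namely certifying that the correct extremal value is $L_{\sigma,n}(-|z|)$ rather than the weaker $1-2\sum[\sigma]_{n/k}r^k$. The averaging reformulation is what makes this tractable, since it channels the extremal identity for $L_{\sigma,n}$ at $-r$ through the nonnegative kernel; the alternative of analysing $\min_\theta Re\,L_{\sigma,n}(re^{i\theta})$ directly is delicate, because for merely positive decreasing coefficients the minimum need not occur at $\theta=\pi$, so I would avoid it in favour of the inductive averaging argument.
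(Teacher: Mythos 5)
The paper gives no proof of this theorem --- it states that Theorems 2--4 ``can be obtained \emph{mutatis mutandis} as in Section 2 of \cite{BO}'' and omits the arguments --- so there is nothing to compare line by line; what you have written is a correct, self-contained proof that fills that gap. Part (a) is exactly the triangle-inequality consequence of the coefficient bound~(\ref{7}), and your sharpness observation (all terms of $L_{\sigma,n}$ positive at $z=r$) is right. For part (b), the key move --- rewriting the iteration~(\ref{3}) via $t=zs$ as averaging against the probability density $(\sigma-(n-1))s^{\sigma-n}$ on $[0,1]$, verified coefficientwise so that $z^{\sigma-(n-1)}$ causes no branch trouble --- is sound, the base case is the classical sharp bound $Re\,p(z)\ge(1-r)/(1+r)=L_{\sigma,0}(-r)$, and the inductive step closes because the same averaging identity evaluated along $[-r,0]$ reproduces $L_{\sigma,n}(-r)$. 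Your remark that $L_{\sigma,n}(-z)$ is itself the iterate of $L_0(-z)\in P$ (the iteration commutes with the rotation $z\mapsto -z$) is exactly what certifies sharpness of the lower bound, and you correctly identify why the naive triangle inequality and the ``minimize $Re\,L_{\sigma,n}$ over the circle'' route both fail or are delicate. This inductive averaging argument is the standard mechanism for such iterated integral transforms and is presumably the one intended by the citation of \cite{BO}; the only stylistic alternative would be to combine the Herglotz representation with Corollary 2 ($p_{\sigma,n}\prec L_{\sigma,n}$), but that reintroduces the minimization problem you rightly avoid.
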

\begin{corollary}
$p_{\sigma,n}\in P_n^\sigma$ if and only if $p_{\sigma,n}(z)\prec
L_{\sigma,n}(z)$.
\end{corollary}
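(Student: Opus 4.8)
The plan is to identify both families as the image of the Carath\'eodory class under the iteration operator, and then to match their extreme points. Write $\mathcal{I}_\gamma$ for the single averaging step $\mathcal{I}_\gamma q(z)=\frac{\gamma}{z^\gamma}\int_0^z t^{\gamma-1}q(t)\,dt$, so that the full iteration of Definition 5 is $J=\mathcal{I}_{\sigma-(n-1)}\circ\cdots\circ\mathcal{I}_\sigma$ and $p_{\sigma,n}=Jp$; thus $P_n^\sigma=J(P)$. By ~(\ref{5}), $J$ is the diagonal operator multiplying the $k$-th Taylor coefficient by $[\sigma]_{n/k}\neq 0$, hence it is linear and continuous in the topology of locally uniform convergence. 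The corollary asserts exactly that $J(P)=\{q:q\prec L_{\sigma,n}\}$.

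First I would settle the easy inclusion. Since $p\in P$ iff $p\prec L_0$ with $L_0(z)=(1+z)/(1-z)$ convex univalent, the forward direction follows by applying, $n$ times, the standard fact that for $\gamma>0$ the averaging operator $\mathcal{I}_\gamma$ preserves subordination to a convex univalent dominant and sends a convex univalent function to a convex univalent one. Starting from $p_{\sigma,0}=p\prec L_0=L_{\sigma,0}$, each step gives $p_{\sigma,m}\prec L_{\sigma,m}$ with $L_{\sigma,m}$ again convex univalent; here the hypothesis $\sigma-(n-1)>0$ guarantees that every parameter $\gamma_m=\sigma-(m-1)$ is positive. In particular this also establishes that $L_{\sigma,n}$ is convex univalent, and yields $p_{\sigma,n}\in P_n^\sigma\Rightarrow p_{\sigma,n}\prec L_{\sigma,n}$.

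For the converse---and in fact for both inclusions at once---I would pass to a convex-hull representation. By the Herglotz representation, $P=\overline{\mathrm{co}}\{L_0(xz):|x|=1\}$. A direct coefficient computation from ~(\ref{5}), using $L_0(xz)=1+2\sum_{k\ge1}x^kz^k$, shows $J\bigl(L_0(x\,\cdot)\bigr)=L_{\sigma,n}(x\,\cdot)$ for every $|x|=1$. Since $J$ is linear and continuous and $P$ is compact, the hull passes through $J$ to give $J(P)=\overline{\mathrm{co}}\{L_{\sigma,n}(xz):|x|=1\}$. On the other hand, because $L_{\sigma,n}$ is convex univalent, the subordination family $\{q:q\prec L_{\sigma,n}\}$ is itself compact and convex and, by the extreme-point theorem for subordination to a convex univalent function, equals $\overline{\mathrm{co}}\{L_{\sigma,n}(xz):|x|=1\}$. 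Comparing the two expressions yields $J(P)=\{q:q\prec L_{\sigma,n}\}$, which is the corollary.

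The main obstacle is the convexity (and univalence) of $L_{\sigma,n}$, on which both halves rest: the forward step needs the dominant to remain convex throughout the iteration, and the converse needs it to invoke the extreme-point description of the subordination class. I would obtain it by induction on $n$, deducing the convexity of $L_{\sigma,m}=\mathcal{I}_{\gamma_m}L_{\sigma,m-1}$ from that of $L_{\sigma,m-1}$ through the convexity-preserving property of $\mathcal{I}_{\gamma_m}$ for $\gamma_m>0$, with base case the M\"obius function $L_0$. A secondary point to verify is that the multipliers $[\sigma]_{n/k}$, which decay like $k^{-n}$, keep $J$ continuous and $J(P)$ compact, so that $J(\overline{\mathrm{co}}\,S)=\overline{\mathrm{co}}\,J(S)$ may be applied; this is routine given ~(\ref{5}).
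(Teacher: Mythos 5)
Your forward direction is workable: granting that each intermediate dominant $L_{\sigma,m}$ is convex univalent, the Hallenbeck--Ruscheweyh subordination theorem applied step by step does give $p_{\sigma,n}\prec L_{\sigma,n}$ for every $p_{\sigma,n}\in P_n^\sigma$ (and your identity $J\bigl(L_0(x\,\cdot)\bigr)=L_{\sigma,n}(x\,\cdot)$, hence $P_n^\sigma=\overline{\mathrm{co}}\{L_{\sigma,n}(xz):|x|=1\}$, is a correct and useful computation). The paper itself offers no proof --- it defers the whole block of Section~2 results to \cite{BO} ``mutatis mutandis'' --- so the comparison here is really about whether your argument stands on its own.

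It does not, because the converse rests on a false lemma. For a convex univalent $h$ the correct statement is $\overline{\mathrm{co}}\bigl(\{q:q\prec h\}\bigr)=\overline{\mathrm{co}}\{h(xz):|x|=1\}$; the subordination class itself is in general a \emph{proper} subset of this hull (already $h(z)=z$ shows this: $z^2\prec z$ but $z^2\notin\overline{\mathrm{co}}\{xz\}=\{az:|a|\le1\}$). The coincidence $\{q:q\prec L_0\}=\overline{\mathrm{co}}\{L_0(xz)\}$ is the Herglotz representation, a special feature of the half-plane map, not of convex maps generally. And the failure is not repairable here: take $q(z)=L_{\sigma,n}(z^2)$ with $n\ge1$. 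Then $q\prec L_{\sigma,n}$, but the coefficient of $z^2$ in $q$ is $2[\sigma]_{n/1}$, which strictly exceeds the sharp bound $2[\sigma]_{n/2}$ of ~(\ref{7}) valid for every member of $P_n^\sigma$ (the multiplier $[\sigma]_{n/k}$ is strictly decreasing in $k$). So $q\notin P_n^\sigma$, your claimed equality $J(P)=\{q:q\prec L_{\sigma,n}\}$ fails, and indeed the ``if'' half of the corollary as literally stated is false for $n\ge1$; only the implication $p_{\sigma,n}\in P_n^\sigma\Rightarrow p_{\sigma,n}\prec L_{\sigma,n}$ and the equality of the two closed convex hulls survive. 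Separately, the convexity of $L_{\sigma,m}$ that both halves of your argument lean on is asserted, not proved: it amounts to the convexity of $\sum_{k\ge1}\frac{\gamma+1}{\gamma+k}z^k$ for every $\gamma=\sigma-(m-1)>0$, including $0<\gamma<1$, and needs its own justification.
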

\begin{remark}
If we choose $n=0$ in the corollary above we see that $p\in P$ if
and only if $p(z)\prec L_0(z)$ which is well known.
\end{remark}
\begin{remark}
For $z\in E$, the following are equivalent:

{\rm(i)} $p\prec L_0(z)$,

{\rm(ii)} $p\in P$,

{\rm(iii)} $p_{\sigma,n}\in P_n^\sigma$,

{\rm(iv)} $p_{\sigma,n}(z)\prec L_{\sigma,n}(z)$.
\end{remark}
\begin{theorem}
$P_n^\sigma$ is a convex set.
\end{theorem}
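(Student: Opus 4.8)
The plan is to realize $P_n^\sigma$ as the image of the convex set $P$ under a linear operator, and then to invoke the elementary fact that a linear image of a convex set is convex. First I would record that $P$ is itself convex: for $p,\tilde p\in P$ and $0\le\lambda\le 1$ the combination $\lambda p+(1-\lambda)\tilde p$ is analytic in $E$, equals $1$ at the origin, and satisfies $Re\bigl(\lambda p+(1-\lambda)\tilde p\bigr)=\lambda\,Re\,p+(1-\lambda)\,Re\,\tilde p>0$, so it again belongs to $P$. This is well known.

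The key step is to check that the transformation $\mathcal{T}\colon p\mapsto p_{\sigma,n}$ given by ~(\ref{3}) is linear. This is immediate from the recursive definition: the passage from $p_{\sigma,m-1}$ to $p_{\sigma,m}$ consists of integration of $t^{\sigma-m}p_{\sigma,m-1}(t)$ followed by multiplication by $(\sigma-(m-1))z^{-(\sigma-(m-1))}$, and both operations are linear; the full transform is a composition of $n$ such steps and hence linear. Equivalently, linearity can be read directly from the coefficient formula ~(\ref{5}): if $p$ has coefficients $c_k$, then $p_{\sigma,n}$ has coefficients $[\sigma]_{n/k}c_k$, and the multiplier $[\sigma]_{n/k}$ depends only on $\sigma$, $n$ and $k$ and not on the function $p$. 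Thus $\mathcal{T}$ rescales each coefficient by a fixed factor and so acts linearly on $P$.

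With these two facts the conclusion is immediate. By Definition 5 we have $P_n^\sigma=\mathcal{T}(P)$. Given $q_1,q_2\in P_n^\sigma$, write $q_1=\mathcal{T}p$ and $q_2=\mathcal{T}\tilde p$ with $p,\tilde p\in P$. For $0\le\lambda\le 1$, linearity of $\mathcal{T}$ yields
\[
\lambda q_1+(1-\lambda)q_2=\mathcal{T}\bigl(\lambda p+(1-\lambda)\tilde p\bigr),
\]
and since $\lambda p+(1-\lambda)\tilde p\in P$ by convexity of $P$, the right-hand side lies in $P_n^\sigma$. Hence $P_n^\sigma$ is convex.

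I do not anticipate any genuine obstacle here. The only point demanding care is the linearity verification, specifically confirming that the factor attached to each coefficient in ~(\ref{5}) is independent of $p$; once that is in place the argument is a routine application of the definition of convexity.
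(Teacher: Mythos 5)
Your proposal is correct and rests on the same two facts the paper uses — linearity of the iterated integral transform and convexity of $P$; the paper's proof simply unwinds that linearity one iteration at a time by induction, whereas you state it for the composite map and invoke the fact that a linear image of a convex set is convex. The two arguments are essentially identical in substance.
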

\begin{proof}
Let $p_{\sigma,n},\;q_{\sigma,n} \in P_n^\sigma$. Then for
nonnegative real numbers $\mu_1$ and $\mu_2$ with $\mu_1+\mu_2=1$,
we have
\[
\mu_1p_{\sigma,n}+\mu_2q_{\sigma,n}=\frac{\sigma-(n-1)}{z^{\sigma-(n-1)}}
\int_0^zt^{\sigma-n}(\mu_1p_{\sigma,n-1}+\mu_2q_{\sigma,n-1})(t)dt.
\]
The result follows inductively since
$\mu_1p_{\sigma,0}+\mu_2q_{\sigma,0}=\mu_1p(z)+\mu_2q(z)\in P$,
for $p,\;q\in P$.
\end{proof}

\section{Characterizations of the class $B_n^\sigma(\beta)$}
In this section we present the main results of this work. These
include inclusion, growth, covering, distortion, closure under
certain integral transformation and coefficient inequalities.

First we prove the following lemma, similar to Lemma 1.
\begin{lemma}
Let $f\in A$ and $\alpha$, $\beta$ and $D^n$ as defined above.
Then the following are equivalent:

{\rm(i)} $f\in B_n^\sigma(\beta)$,

{\rm(ii)} $(L_n^\sigma f(z)/z-\beta)/(1-\beta)\in P$,

{\rm(iii)} $(f(z)/z-\beta)/(1-\beta)\in P_n^\sigma$.
\end{lemma}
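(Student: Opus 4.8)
The plan is to prove the three-way equivalence by establishing the chain $(\mathrm{i})\Leftrightarrow(\mathrm{ii})$ directly from the definitions, and then $(\mathrm{ii})\Leftrightarrow(\mathrm{iii})$ by exploiting the structural relationship between the operators $L_n^\sigma$, $l_n^\sigma$ and the iteration $P_n^\sigma$ already recorded in the excerpt.

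I would first dispatch $(\mathrm{i})\Leftrightarrow(\mathrm{ii})$, which is essentially a matter of unwinding Definition 3. By definition $f\in B_n^\sigma(\beta)$ means $\mathrm{Re}\,(L_n^\sigma f(z)/z)>\beta$ for $z\in E$. Writing $p(z)=(L_n^\sigma f(z)/z-\beta)/(1-\beta)$, the condition $\mathrm{Re}\,(L_n^\sigma f(z)/z)>\beta$ is equivalent to $\mathrm{Re}\,p(z)>0$; and since $L_n^\sigma f(z)/z$ is analytic in $E$ with value $1$ at the origin (because $L_n^\sigma f\in A$, so its expansion begins with $z$), one checks that $p(0)=1$. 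Hence $p\in P$, and conversely membership of $p$ in $P$ returns the geometric condition. This step is routine and poses no obstacle.

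The substance of the lemma is $(\mathrm{ii})\Leftrightarrow(\mathrm{iii})$, and here I would lean on Remark~1.3 and Definition~5. The key observation is that, by Remark~1.3, $f(z)=l_n^\sigma(L_n^\sigma f(z))$, so if we set $q(z)=L_n^\sigma f(z)/z$ then $q\in P(\beta)$ under hypothesis $(\mathrm{ii})$, and the excerpt has already noted the representation $f(z)=l_n^\sigma[z\,q(z)]$. The plan is to show that applying $l_n^\sigma$ to $z\,q(z)$ and dividing by $z$ realizes exactly the \emph{sigma-nth} integral iteration of Definition~5. Concretely, I would expand $p(z):=(L_n^\sigma f(z)/z-\beta)/(1-\beta)$ as a power series, identify its coefficients with the $c_k$ appearing in $(\ref{1})$, and then compute the coefficients of $(f(z)/z-\beta)/(1-\beta)$ via the operator $l_n^\sigma=\tau_\sigma^{(-1)}*\tau_{\sigma,n}*(\cdot)$. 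Comparing the resulting coefficient relation with the multiplier $[\sigma]_{n/k}$ in $(\ref{5})$, I expect to recover precisely $c_{n,k}^\sigma=[\sigma]_{n/k}c_k$, which by Definition~5 is exactly the statement that $(f(z)/z-\beta)/(1-\beta)$ is the iteration $p_{\sigma,n}$ of $p=(L_n^\sigma f(z)/z-\beta)/(1-\beta)$, hence lies in $P_n^\sigma$. The converse direction runs the same coefficient identity backwards, using that the multiplier $[\sigma]_{n/k}$ is nonzero so the correspondence $c_k\leftrightarrow c_{n,k}^\sigma$ is invertible.

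The main obstacle I anticipate is verifying cleanly that the action of the operator $l_n^\sigma$ on the coefficient level coincides with the integral iteration $(\ref{3})$ — that is, confirming that convolving with $\tau_\sigma^{(-1)}*\tau_{\sigma,n}$ multiplies the $k$th coefficient exactly by $[\sigma]_{n/k}$, matching $(\ref{5})$. This amounts to computing the Taylor coefficients of $\tau_\sigma^{(-1)}$ and $\tau_{\sigma,n}$ and checking the telescoping of Pochhammer ratios, and care is needed with the role of the leading $z/z$ normalization and the shift by $\beta$ (one must confirm the affine transform $x\mapsto(x-\beta)/(1-\beta)$ commutes appropriately with the linear operator so that the constant terms and the passage between $P(\beta)$ and $P$ are handled consistently). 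Once that identification is secured, the equivalence $(\mathrm{ii})\Leftrightarrow(\mathrm{iii})$ follows immediately, and combined with $(\mathrm{i})\Leftrightarrow(\mathrm{ii})$ the lemma is complete.
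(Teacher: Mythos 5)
Your proposal is correct and follows essentially the same route as the paper: (i)$\Leftrightarrow$(ii) by unwinding Definition 3, then (ii)$\Leftrightarrow$(iii) by applying $l_n^\sigma$ to the expansion $L_n^\sigma f(z)=z+(1-\beta)\sum_{k\ge1}c_kz^{k+1}$ and identifying the resulting coefficients with $c_{n,k}^\sigma=[\sigma]_{n/k}c_k$ from (5), so that $(f(z)/z-\beta)/(1-\beta)$ is recognized as the \emph{sigma-nth} iterate of $p$. The coefficient verification you flag as the main obstacle is exactly the step the paper asserts without detail, and your explicit check (together with the invertibility of the multipliers for the converse) fills it in correctly.
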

\begin{proof}
That (i) $\Leftrightarrow$ (ii) is clear from Definition 5. Now
(ii) is true $\Leftrightarrow$ there exists $p\in P$ such that
\begin{equation}
\begin{split}
L_n^\sigma f(z)=z[\beta+(1-\beta)p(z)]\\
&=z+(1-\beta)\sum_{k=1}^\infty c_kz^{k+1}\,. \label{8}
\end{split}
\end{equation}
Applying the operator $l_n^\sigma$ on ~(\ref{8}), we have
~(\ref{8}) $\Leftrightarrow$
\[
f(z)=z+(1-\beta)\sum_{k=1}^\infty c_{n,k}^\sigma z^{k+1}
\]
$\Leftrightarrow$
\begin{equation}
\frac{f(z)/z-\beta}{1-\beta}=1+\sum_{k=1}^\infty c_{n,k}^\sigma
z^k\,. \label{9}
\end{equation}
The right hand side of ~(\ref{9}) is a function in $P_n^\sigma$.
This proves the lemma.
\end{proof}
Now the main results.
\begin{theorem}
For any fixed $\sigma$ satisfying $\sigma-(n-1)>0$, the following
inclusion holds
\[
B_{n+1}^\sigma(\beta)\subset B_n^\sigma(\beta),\;\;\;n\in \N.
\]
\end{theorem}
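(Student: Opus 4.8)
The plan is to reduce this inclusion between the function classes to the already-established inclusion $P_{n+1}^\sigma\subset P_n^\sigma$ of Theorem 3, using Lemma 2 purely as a translation device. The decisive observation is that the transform appearing in part (iii) of Lemma 2, namely $q(z)=(f(z)/z-\beta)/(1-\beta)$, depends only on $f$ and not on the index $n$; what distinguishes the classes $B_n^\sigma(\beta)$ from one another is the membership of this one fixed function $q$ in the varying families $P_n^\sigma$.

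First I would take $f\in B_{n+1}^\sigma(\beta)$ and apply Lemma 2 at level $n+1$. The equivalence (i)$\Leftrightarrow$(iii) there gives $q\in P_{n+1}^\sigma$, with $q$ as above. One should note explicitly that $q$ is precisely the same function one tests for membership in $B_n^\sigma(\beta)$; only the target family changes as the index drops.

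Next I would invoke Theorem 3 to obtain $q\in P_n^\sigma$. Finally, applying the implication (iii)$\Rightarrow$(i) of Lemma 2 at level $n$ to this same $q$ yields $f\in B_n^\sigma(\beta)$, completing the chain $f\in B_{n+1}^\sigma(\beta)\Rightarrow q\in P_{n+1}^\sigma\Rightarrow q\in P_n^\sigma\Rightarrow f\in B_n^\sigma(\beta)$.

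I expect no genuine obstacle: the substantive work is entirely contained in Theorem 3, while Lemma 2 merely transports that fact from the $p$-families to the $f$-classes. The only points deserving a word of care are that both classes be genuinely defined — which, since $B_{n+1}^\sigma(\beta)$ invokes the operator $L_{n+1}^\sigma$, requires $\sigma-n>0$ in addition to the stated $\sigma-(n-1)>0$ — and that Lemma 2 be applied consistently at the two distinct indices $n$ and $n+1$ to the single, $n$-independent transform $q$.
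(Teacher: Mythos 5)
Your proposal is correct and follows essentially the same route as the paper: pass from $f\in B_{n+1}^\sigma(\beta)$ to $(f(z)/z-\beta)/(1-\beta)\in P_{n+1}^\sigma$ via Lemma~2, apply Theorem~3 to drop the index, and translate back with Lemma~2. Your added observation that the hypothesis should really read $\sigma-n>0$ (so that the class $B_{n+1}^\sigma(\beta)$ and the family $P_{n+1}^\sigma$ are defined) is a fair point of care that the paper glosses over.
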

\begin{proof}
Let $f\in B_{n+1}^\sigma(\beta)$. Then by Lemma 2,
$(f(z)/z-\beta)/(1-\beta)\in P_{n+1}^\sigma$. By Theorem 3,
$(f(z)/z-\beta)/(1-\beta)\in P_n^\sigma$. That is, by Lemma 2,
again $f\in B_n^\sigma(\beta)$.
\end{proof}
\begin{theorem}
The class $B_1^\sigma(\beta)$ consists only of univalent functions
in $E$.
\end{theorem}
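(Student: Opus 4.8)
The plan is to route the univalence of $B_1^\sigma(\beta)$ through the characterisation results of Section~2 together with the already-quoted univalence of the classes $T_n^\alpha(\beta)$. By Lemma~2 a function $f$ lies in $B_1^\sigma(\beta)$ precisely when $(f(z)/z-\beta)/(1-\beta)\in P_1^\sigma$, and the remark that $P_1^\sigma=P_1$ lets me regard this as membership in the one-parameter first integral transform of Definition~4 with parameter $\alpha=\sigma$. Inside the family $P_1$ the equivalences of Lemma~1 from \cite{BO} become available, and since $T_n^\alpha(\beta)$ consists only of univalent functions for every $n\geq1$, the univalence of $f$ should follow. The base value $\sigma=1$ is immediate, because $B_1^1(\beta)\equiv R(\beta)=T_1^1(\beta)$ and bounded-turning functions are univalent by the Noshiro--Warschawski theorem.

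Before invoking that machinery I would make the operator explicit. A short convolution computation gives $L_1^\sigma f(z)=\frac1\sigma\left[(\sigma-1)f(z)+zf'(z)\right]$, so the defining condition $Re\,\frac{L_1^\sigma f(z)}{z}>\beta$ reads $Re\left\{(\sigma-1)\frac{f(z)}{z}+f'(z)\right\}>\sigma\beta$, equivalently $f'(z)=(1-\sigma)\frac{f(z)}{z}+\sigma\frac{L_1^\sigma f(z)}{z}$. Here both fractions on the right have real part exceeding $\beta$: the second by hypothesis, and the first because Lemma~2 and Corollary~1 place $(f(z)/z-\beta)/(1-\beta)$ in $P_1^\sigma\subset P$, whence $Re\,\frac{f(z)}{z}>\beta$.

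This already settles the range $0<\sigma\leq1$ cleanly and without the detour through $T_1^\alpha(\beta)$: there the multipliers $1-\sigma$ and $\sigma$ are both nonnegative, so taking real parts yields $Re\,f'(z)>\beta\geq0$, i.e.\ $f\in R(\beta)$, and hence $f$ is univalent by Noshiro--Warschawski.

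The step I expect to be the real obstacle is the range $\sigma>1$. There the multiplier $1-\sigma$ is negative, the estimate $Re\,f'>0$ is no longer forced, and the reduction to $T_1^\alpha(\beta)$ is itself delicate: the condition defining $B_1^\sigma(\beta)$ controls the first power $f/z$, whereas the one identifying $T_1^\alpha(\beta)$ in Lemma~1(iii) controls $(f/z)^\alpha$, and these agree only when the power matches the transform parameter, that is only for $\sigma=1$. To close the gap for $\sigma>1$ I would abandon plain bounded turning in favour of close-to-convexity, starting from the representation $f(z)=\sigma z^{1-\sigma}\int_0^z t^{\sigma-1}p_\beta(t)\,dt$ with $p_\beta\in P(\beta)$ supplied by Lemma~2, and seeking a starlike $g$ with $Re\,\frac{zf'(z)}{g(z)}>0$ on $E$, the kernel $t^{\sigma-1}$ dictating the natural choice of $g$. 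Establishing that positivity --- and thereby the precise range of $\sigma$ and $\beta$ for which univalence actually persists --- is the crux on which the whole argument turns.
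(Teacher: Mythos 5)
Your opening paragraph is, essentially verbatim, the paper's entire proof: Lemma~2 puts $(f(z)/z-\beta)/(1-\beta)$ in $P_1^\sigma$, Remark~2 identifies $P_1^\sigma$ with $P_1$ (the transform of Definition~4 taken with $\alpha=\sigma$), and Lemma~1 together with the univalence of $T_1^\alpha(\beta)$ is then invoked to conclude. The objection you raise against this route is exactly the right one: Lemma~1(iii) characterises $T_1^\alpha(\beta)$ by $(f(z)^\alpha/z^\alpha-\beta)/(1-\beta)\in P_1$, whereas what Lemma~2 supplies is the \emph{first} power $(f(z)/z-\beta)/(1-\beta)\in P_1$; the two conditions coincide only for $\sigma=1$, and the paper passes from one to the other without comment. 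Your direct argument for $0<\sigma\le 1$ --- writing $f'(z)=(1-\sigma)f(z)/z+\sigma L_1^\sigma f(z)/z$, noting $Re\,(f(z)/z)>\beta$ from $P_1^\sigma\subset P$, and concluding $Re\,f'(z)>\beta\ge 0$ --- is correct and already establishes more than the paper's proof legitimately does.

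For $\sigma>1$ your proposal is only a programme, and it cannot be completed in the stated generality, because the statement itself fails for large $\sigma$. Take $\beta=0$ and the extremal function of ~(\ref{10}) with $n=1$, namely $f_\sigma(z)=z+2\sum_{k\ge 2}\tfrac{\sigma}{\sigma+k-1}z^k\in B_1^\sigma(0)$. As $\sigma\to\infty$ the coefficients increase to $2$, so $f_\sigma\to h(z)=z(1+z)/(1-z)$ locally uniformly in $E$, hence $f_\sigma'\to h'(z)=(1+2z-z^2)/(1-z)^2$, which vanishes at $z=1-\sqrt{2}\in E$. By Hurwitz's theorem $f_\sigma'$ has a zero near $1-\sqrt{2}$ for all sufficiently large $\sigma$, so $f_\sigma$ is not even locally univalent. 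Thus the ``crux'' you flag is not a missing lemma but a genuine obstruction: no choice of starlike comparison function $g$ can rescue close-to-convexity once $f'$ has a zero, and any correct version of the theorem must restrict $\sigma$ (your range $0<\sigma\le 1$ is safe) or strengthen the hypothesis. Your diagnosis of where the paper's argument breaks is therefore accurate; what your write-up lacks is the recognition that the $\sigma>1$ case is not merely hard but false in general.
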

\begin{proof}
Let $f\in B_1^\sigma(\beta)$. Then by Lemma 2,
$(f(z)/z-\beta)/(1-\beta)\in P_1^\sigma$. Since $\sigma$ is any
fixed integer satisfying $\sigma-(n-1)>0$, we have $\sigma>0$ for
$n=1$ and by Remark 2, it follows that
$(f(z)/z-\beta)/(1-\beta)\in P_1$. Thus by Lemma 1, this implies
that the function $f(z)$ belongs to the class
$T_1^\sigma(\beta)(\equiv T_1^\alpha(\beta))$ which consists only
of univalent functions in $E$.
\end{proof}
From Theorems 5 and 6 we have
\begin{corollary}
For $n\geq 1$, $B_n^\sigma(\beta)$ consists only of univalent
functions in $E$.
\end{corollary}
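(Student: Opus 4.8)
The plan is to reduce everything to the case $n=1$ already handled by Theorem 6, by descending through the inclusion supplied by Theorem 5. Fix $\sigma$ with $\sigma-(n-1)>0$ and take an arbitrary $f\in B_n^\sigma(\beta)$; the goal is to show that $f$ is univalent in $E$. When $n=1$ there is nothing to prove, as this is precisely the content of Theorem 6, so I may assume $n\geq 2$.

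Next I would iterate Theorem 5. Reading its statement with the running index relabelled as $m$, it asserts $B_{m+1}^\sigma(\beta)\subset B_m^\sigma(\beta)$ whenever $\sigma-(m-1)>0$. Applying this successively for $m=n-1,\,n-2,\,\dots,\,1$ yields the chain of inclusions
\[
B_n^\sigma(\beta)\subset B_{n-1}^\sigma(\beta)\subset\cdots\subset B_1^\sigma(\beta).
\]
In particular $f\in B_1^\sigma(\beta)$, and then Theorem 6 guarantees that $f$ is univalent in $E$. Since $f$ was an arbitrary member of $B_n^\sigma(\beta)$, this establishes the corollary.

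The only point demanding care — and the step I would flag as the main obstacle, though it is entirely routine — is verifying that each link in the chain is a legitimate instance of Theorem 5. This requires the parameter constraint $\sigma-(m-1)>0$ at each intermediate index $m$ with $1\leq m\leq n-1$, which is also exactly what is needed for each intermediate class $B_m^\sigma(\beta)$ to be well-defined in the first place. Having started from $\sigma-(n-1)>0$, that is $\sigma>n-1$, we obtain $\sigma>n-1>m-1$ for every such $m$, so all the required inequalities hold automatically and no link of the chain can fail. Hence the descent is valid and the result follows.
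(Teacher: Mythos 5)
Your proof is correct and follows exactly the route the paper intends: the paper derives this corollary directly "from Theorems 5 and 6," i.e., by iterating the inclusion $B_{m+1}^\sigma(\beta)\subset B_m^\sigma(\beta)$ down to $B_1^\sigma(\beta)$ and invoking the univalence of that class. Your additional check that $\sigma-(m-1)>0$ holds at every intermediate index is a sensible verification that the paper leaves implicit.
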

\begin{theorem}
Let $f\in B_n^\sigma(\beta)$. Then we have the sharp inequalities
\[
|a_k|\leq 2(1-\beta)[\sigma]_{n/(k-1)},\;\;\;k\geq 2.
\]
Equality is attained for
\begin{equation}
f(z)=z+2(1-\beta)\sum_{k=2}^\infty [\sigma]_{n/(k-1)}z^k\,.
\label{10}
\end{equation}
\end{theorem}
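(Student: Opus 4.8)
The plan is to reduce the whole statement to the sharp coefficient bound (7) for the family $P_n^\sigma$ by means of Lemma 2, exploiting the fact that the Taylor coefficients of $f$ are, up to an index shift and the factor $1-\beta$, precisely the coefficients $c_{n,k}^\sigma$ of an iterated transform $p_{\sigma,n}\in P_n^\sigma$. First I would invoke Lemma 2: since $f\in B_n^\sigma(\beta)$, the function $(f(z)/z-\beta)/(1-\beta)$ belongs to $P_n^\sigma$, and by (9) it admits the expansion $1+\sum_{k=1}^\infty c_{n,k}^\sigma z^k$. Solving this relation for $f$ yields
\[
f(z)=z+(1-\beta)\sum_{k=1}^\infty c_{n,k}^\sigma z^{k+1},
\]
so that comparing coefficients of $z^k$ gives $a_k=(1-\beta)c_{n,k-1}^\sigma$ for every $k\geq 2$.

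Next I would apply inequality (7) with index $k-1$, namely $|c_{n,k-1}^\sigma|\leq 2[\sigma]_{n/(k-1)}$, which is itself a consequence of the representation (5) together with the Caratheodory bound $|c_k|\leq 2$. Taking moduli in the relation $a_k=(1-\beta)c_{n,k-1}^\sigma$ then produces $|a_k|\leq 2(1-\beta)[\sigma]_{n/(k-1)}$, which is exactly the asserted estimate. For the sharpness I would appeal to the equality clause accompanying (7): the bound $|c_{n,k-1}^\sigma|=2[\sigma]_{n/(k-1)}$ is attained precisely when $p_{\sigma,n}(z)=L_{\sigma,n}(z)$, that is, when the underlying $p\in P$ is the Mobius function $L_0(z)=(1+z)/(1-z)$, for which $c_k=2$ and hence $c_{n,k}^\sigma=2[\sigma]_{n/k}$. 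Substituting these values into the displayed expansion of $f$ and re-indexing via $m=k+1$ reproduces the extremal function (10).

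The argument is entirely routine once Lemma 2 is in hand, since the real content — the iteration machinery and the sharp modulus bound — is already packaged in inequality (7) and the family $P_n^\sigma$. The only point that genuinely requires care is the index shift between the coefficients $a_k$ of $f$ and the coefficients $c_{n,k}^\sigma$ of $p_{\sigma,n}$: because a term $z^{k+1}$ in the series for $f$ contributes to $a_{k+1}$, one must consistently pair $a_k$ with $c_{n,k-1}^\sigma$ rather than $c_{n,k}^\sigma$, and this shift must be tracked through to the re-indexing in the sharpness step so that (10) emerges correctly. Beyond this bookkeeping I do not anticipate any obstacle.
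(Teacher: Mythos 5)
Your proposal is correct and follows exactly the paper's route: the paper's own proof is the one-line observation that the result follows from equation (9) and inequality (7), which is precisely the reduction you carry out, with the index shift $a_k=(1-\beta)c_{n,k-1}^\sigma$ handled correctly and sharpness drawn from the equality case $p_{\sigma,n}=L_{\sigma,n}$.
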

\begin{proof}
The result follows from equation ~(\ref{9}) and the inequality
~(\ref{7}).
\end{proof}
\begin{theorem}
The class $B_n^\sigma(\beta)$ is closed under the Bernard integral
\begin{equation}
F(z)=\frac{c+1}{z^c}\int_0^zt^{c-1}f(t)dt,\;\;\;c+1>0\,.
\label{11}
\end{equation}
\end{theorem}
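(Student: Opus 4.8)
The plan is to pass, via Lemma 2, from the geometric condition defining $B_n^\sigma(\beta)$ to a membership statement in the family $P_n^\sigma$, and then to track how the Bernard transform ~(\ref{11}) acts on the underlying Carath\'eodory function. Writing $f(z)=z+\sum_{k\ge 2}a_kz^k$, a termwise integration of ~(\ref{11}) gives
\[
F(z)=z+\sum_{k=2}^\infty\frac{c+1}{c+k}\,a_kz^k ,
\]
so the coefficients of $F$ are those of $f$ scaled by $(c+1)/(c+k)$. By Lemma 2 the hypothesis $f\in B_n^\sigma(\beta)$ is equivalent to
\[
\frac{f(z)/z-\beta}{1-\beta}=1+\sum_{k=1}^\infty c_{n,k}^\sigma z^k\in P_n^\sigma ,
\]
where, by ~(\ref{5}), $c_{n,k}^\sigma=[\sigma]_{n/k}c_k$ for some $p(z)=1+\sum c_kz^k\in P$. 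It therefore suffices, reading Lemma 2 in the reverse direction, to show that the analogous expression built from $F$ again lies in $P_n^\sigma$.

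Computing that expression from the coefficients of $F$ and shifting the index, one finds
\[
\frac{F(z)/z-\beta}{1-\beta}=1+\sum_{k=1}^\infty\frac{c+1}{c+k+1}\,c_{n,k}^\sigma z^k=1+\sum_{k=1}^\infty[\sigma]_{n/k}\Big(\frac{c+1}{c+k+1}\,c_k\Big)z^k .
\]
Because the multiplier $[\sigma]_{n/k}$ is exactly the factor that, by ~(\ref{5}), converts a function in $P$ into its $\sigma$-$n$th iteration, the right-hand side is precisely the $\sigma$-$n$th iteration (Definition 5) of the function $\tilde p(z)=1+\sum_{k\ge 1}\tilde c_kz^k$ with $\tilde c_k=\frac{c+1}{c+k+1}\,c_k$. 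Consequently the whole problem reduces, by the definition of the family $P_n^\sigma$, to verifying that $\tilde p\in P$.

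This last verification is where I expect the only genuine work to lie. The point is that $\tilde p$ is nothing other than the single integral transform of Definition 4 applied to $p$ with parameter $\alpha=c+1>0$, since that transform sends $1+\sum c_kz^k$ to $1+\sum\frac{\alpha}{\alpha+k}c_kz^k$, and $\alpha=c+1$ reproduces the factor $(c+1)/(c+k+1)$. That such a transform preserves positive real part is the single-step ($n=1$, $\gamma=0$) case of the iteration machinery of Section 2: it places $\tilde p$ in the family produced by one iteration, which by Remark 2 and Corollary 2 is contained in $P$ (this is the classical fact, proved in \cite{BO}, that $p\mapsto\frac{\alpha}{z^\alpha}\int_0^z t^{\alpha-1}p(t)\,dt$ carries $P$ into $P$ for $\alpha>0$). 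With $\tilde p\in P$ in hand, the displayed series is a bona fide element of $P_n^\sigma$, and Lemma 2 then gives $F\in B_n^\sigma(\beta)$, as required. An alternative route, avoiding coefficients, is to note that $L_n^\sigma$ and the Bernard transform are both convolution operators and hence commute, so that $L_n^\sigma F$ is the Bernard transform of $L_n^\sigma f$; writing $L_n^\sigma f(z)/z=\beta+(1-\beta)p(z)$ with $p\in P$ and integrating then exhibits $L_n^\sigma F(z)/z$ as $\beta+(1-\beta)$ times the same positivity-preserving transform of $p$.
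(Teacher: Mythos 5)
Your proof is correct, and it takes a genuinely different (and in fact more general) route than the paper's. The paper's proof rewrites the Bernard transform of $(f(z)/z-\beta)/(1-\beta)$ as one further step of the iteration ~(\ref{3}): it sets $\nu=c+1=\sigma-n$, so that the transformed function is literally $p_{\sigma,n+1}\in P_{n+1}^\sigma$, and then invokes the inclusion $P_{n+1}^\sigma\subset P_n^\sigma$. As written, that argument only establishes closure for the single parameter value $c+1=\sigma-n$, not for arbitrary $c+1>0$ as the theorem asserts. Your argument instead works at the level of the underlying Carath\'eodory function: you observe that the Bernard multiplier $(c+1)/(c+k+1)$ commutes with the multiplier $[\sigma]_{n/k}$ defining $P_n^\sigma$, so the transformed function is the $\sigma$-$n$th iteration of $\tilde p(z)=1+\sum\frac{c+1}{c+k+1}c_kz^k$, and $\tilde p\in P$ because it is the positivity-preserving average $\,(c+1)\int_0^1 s^{c}p(sz)\,ds$ (equivalently, the $n=1$, $\gamma=0$ case of Theorem 1 with the parameter there taken to be $c+1$). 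This buys the full strength of the stated theorem for every $c+1>0$, at the modest cost of a coefficient computation; the paper's route is shorter but buys only one value of $c$. Your closing alternative --- commuting the two convolution operators so that $L_n^\sigma F$ is the Bernard transform of $L_n^\sigma f$ and applying the same averaging argument to $L_n^\sigma f(z)/z$ --- is an equally valid and even cleaner way to reach the same general conclusion.
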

\begin{proof}
From ~(\ref{11}) we have
\begin{equation}
\frac{F(z)/z-\beta}{1-\beta}=\frac{\nu}{z^\nu}\int_0^zt^{\nu-1}\left(\frac{f(t)/t-\beta}{1-\beta}\right)dt\,
\label{12}
\end{equation}
where $\nu=c+1$. Since $f\in B_n^\sigma(\beta)$, taking
$\nu=c+1=\sigma-n$, we can write ~(\ref{12}) as
\[
\frac{F(z)/z-\beta}{1-\beta}=\frac{\sigma-n}{z^{\sigma-n}}\int_0^zt^{(\sigma-n)-1}p_{\sigma,n}(t)dt
\]
which implies that $(F(z)/z-\beta)/(1-\beta)\in P_{n+1}^\sigma$.
Thus by Theorem 2, we have $(F(z)/z-\beta)/(1-\beta)\in
P_n^\sigma$. Hence $F\in B_n^\sigma(\beta)$.
\end{proof}
\begin{theorem}
Let $f\in B_n^\sigma(\beta)$. Then
\[
r+2(1-\beta)\sum_{k=2}^\infty (-1)^{k-1}[\sigma]_{n/(k-1)}r^k\leq
|f(z)|\leq r+2(1-\beta)\sum_{k=2}^\infty [\sigma]_{n/(k-1)}r^k.
\]
The inequalities are sharp.
\end{theorem}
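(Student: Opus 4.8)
The plan is to reduce the growth estimate to the sharp pointwise bounds on the iterated family $P_n^\sigma$ already recorded in Theorem 3. By Lemma 2, the membership $f\in B_n^\sigma(\beta)$ is equivalent to $p_{\sigma,n}(z):=(f(z)/z-\beta)/(1-\beta)\in P_n^\sigma$, so that $f(z)=z[\beta+(1-\beta)p_{\sigma,n}(z)]$ and therefore $|f(z)|=r\,|\beta+(1-\beta)p_{\sigma,n}(z)|$ on $|z|=r$. Thus the whole statement follows once $|\beta+(1-\beta)p_{\sigma,n}(z)|$ is controlled from above and below, and this is exactly what the two parts of Theorem 3 supply.

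For the upper bound I would use $\beta\ge 0$ and $1-\beta>0$ to write $|\beta+(1-\beta)p_{\sigma,n}(z)|\le\beta+(1-\beta)|p_{\sigma,n}(z)|$, insert estimate (a) of Theorem 3, and multiply through by $r$. After the reindexing $k\mapsto k+1$ (so that $[\sigma]_{n/k}r^{k+1}$ becomes $[\sigma]_{n/(k-1)}r^{k}$), the right-hand side collapses to $r+2(1-\beta)\sum_{k\ge 2}[\sigma]_{n/(k-1)}r^k$, the asserted upper bound. Alternatively this bound drops out of the coefficient inequality of Theorem 7 together with the triangle inequality, but the route through Theorem 3 is cleaner and parallels the lower-bound argument.

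The lower bound is the delicate point, and here the triangle inequality is the wrong tool: $|f(z)|\ge|z|-\sum_{k\ge 2}|a_k|r^k$ only yields $r-2(1-\beta)\sum_{k\ge 2}[\sigma]_{n/(k-1)}r^k$, which is strictly smaller than the claimed bound because it discards the alternating signs forced by the Caratheodory structure of $P$. The key observation is that the claimed lower bound is a real-part estimate in disguise. I would therefore use $|w|\ge Re\,w$ to get $|\beta+(1-\beta)p_{\sigma,n}(z)|\ge\beta+(1-\beta)\,Re\,p_{\sigma,n}(z)$, apply part (b) of Theorem 3, multiply by $r$, and reindex; since the factor $(-r)^{k}$, after multiplication by $r$ and relabelling $k\mapsto k+1$, turns into $(-1)^{k-1}r^{k}$, one arrives precisely at $r+2(1-\beta)\sum_{k\ge 2}(-1)^{k-1}[\sigma]_{n/(k-1)}r^k$. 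Recognizing that one must pass through $Re\,p_{\sigma,n}$ rather than $|p_{\sigma,n}|$ is the main obstacle; once that is seen, the rest is routine bookkeeping.

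Finally I would verify sharpness by exhibiting extremals, which are already the functions flagged in Theorem 3. The upper bound is attained by taking $p_{\sigma,n}=L_{\sigma,n}$, i.e. the function (10), evaluated at a real point $z=r$. For the lower bound I would take $p_{\sigma,n}(z)=L_{\sigma,n}(-z)$; this lies in $P_n^\sigma$ because it is the sigma-nth iteration of $L_0(-z)\in P$, and evaluating the corresponding $f$ at $z=r$ (equivalently (10) at $z=-r$) reproduces the lower bound exactly. Both extremal functions are real on the real axis, so no modulus is lost and the inequalities are indeed sharp.
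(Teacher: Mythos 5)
Your proof is correct and takes essentially the same route as the paper: the paper's (very terse) argument likewise substitutes $p_{\sigma,n}(z)=(f(z)/z-\beta)/(1-\beta)$ into Theorem 3, using part (a) for the upper bound and the real-part estimate of part (b) for the lower bound, with the same extremal functions (10) and (13). You have merely supplied the reindexing and the observation $|w|\geq Re\,w$ that the paper leaves implicit.
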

\begin{proof}
The result follows by taking
$p_{\sigma,n}(z)=(f(z)/z-\beta)(1-\beta)$ in Theorem 3. Upper
bound equality is realized for the function given by ~(\ref{10})
while equality in the lower bound equality is attained for the
function
\begin{equation}
f(z)=z+2(1-\beta)\sum_{k=2}^\infty
(-1)^{k-1}[\sigma]_{n/(k-1)}z^k\,. \label{13}
\end{equation}
This completes the proof.
\end{proof}
\begin{theorem}
Each function $f(z)$ in the class $B_n^\sigma(\beta)$ maps the
unit disk onto a domain which covers the disk
$|w|<1+2(1-\beta)\sum_{k=2}^\infty (-1)^{k-1}[\sigma]_{n/(k-1)}$.
The result is sharp.
\end{theorem}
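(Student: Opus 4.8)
The plan is to derive the covering from the lower growth estimate of the preceding theorem, using the elementary principle that any point omitted by $f$ forces a boundary point of the image to have small modulus. Write $R=1+2(1-\beta)\sum_{k=2}^\infty(-1)^{k-1}[\sigma]_{n/(k-1)}$ for the claimed covering radius. Since $f\in B_n^\sigma(\beta)$ has the form $f(z)=z+a_2z^2+\cdots$, it is a non-constant analytic function, so by the open mapping theorem $f(E)$ is open, and being the continuous image of the connected set $E$ it is connected; thus $f(E)$ is a domain containing the origin. (For $n\geq 1$ it is in fact univalent by Corollary 3, but only the domain property is needed here.)

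First I would analyse the boundary $\partial f(E)$. Let $w_0\in\partial f(E)$, so there is a sequence $z_j\in E$ with $f(z_j)\to w_0$. This sequence cannot have a limit point $z^\ast\in E$, for otherwise continuity would give $w_0=f(z^\ast)\in f(E)$, contradicting $w_0\notin f(E)$; hence $|z_j|\to 1$. Applying the lower bound of the growth theorem (Theorem 9) to each $z_j$, namely $|f(z_j)|\geq |z_j|+2(1-\beta)\sum_{k=2}^\infty(-1)^{k-1}[\sigma]_{n/(k-1)}|z_j|^k$, and letting $j\to\infty$ yields $|w_0|\geq R$. Thus every boundary point of the image lies outside the open disk $D_R=\{w:|w|<R\}$.

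Next I would convert this into the covering statement by a connectedness argument, which I expect to be the only delicate point. Decompose $D_R=(D_R\cap f(E))\cup(D_R\cap\mathrm{ext}\,f(E))$, where $\mathrm{ext}\,f(E)=\mathbb{C}\setminus\overline{f(E)}$; this splitting is legitimate precisely because $D_R\cap\partial f(E)=\varnothing$ by the previous step. Both pieces are open, they are disjoint, and the first contains the origin, hence is nonempty. Since $D_R$ is connected, the second piece must be empty, giving $D_R\subseteq f(E)$, which is the asserted covering.

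Finally, for sharpness I would test the extremal function~(\ref{13}). Along the positive real axis its values $f(r)$ are real and, by $f(0)=0$ together with univalence, remain positive for $0<r<1$; as $r\to 1^-$ one computes $f(r)\to R$. Hence $R$ is a radial boundary value of the image of~(\ref{13}), so $w=R$ lies on $\partial f(E)$ and the radius in the covering cannot be enlarged. The main obstacle throughout is not computational but the care needed in the topological step: showing that an omitted point of $D_R$ would have to be separated from the origin by $\partial f(E)$, which is exactly what the connectedness argument formalizes.
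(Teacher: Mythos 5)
Your proof is correct and takes essentially the same route as the paper: the covering radius is obtained as the $r\to 1$ limit of the lower growth bound of Theorem 9, and sharpness is exhibited by the extremal function~(\ref{13}). The only difference is that you make explicit the topological step (every boundary point of $f(E)$ has modulus at least $R$, followed by a connectedness argument) which the paper compresses into the single phrase ``this implies that the range \dots covers the disk.''
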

\begin{proof}
From Theorem 9, we have $|f(z)|\geq r+2(1-\beta)\sum_{k=2}^\infty
(-1)^{k-1}[\sigma]_{n/(k-1)}r^k$. This implies that the range of
every function $f(z)$ in the class $B_n^\sigma(\beta)$ covers the
disk
\begin{multline*}
|w|<1+2(1-\beta)\sum_{k=2}^\infty (-1)^{k-1}[\sigma]_{n/(k-1)}\\
=\inf_{r\to 1}\left\{r+2(1-\beta)\sum_{k=2}^\infty
(-1)^{k-1}[\sigma]_{n/(k-1)}r^k\right\}.
\end{multline*}
The function given by ~(\ref{13}) shows that the result is sharp.
\end{proof}
\begin{theorem}
Let $f\in B_n^\sigma(\beta)$. Define
\[
M(\sigma,n,\beta,r)=\sigma-(n-1)+2(1-\beta)\sum_{k=1}^\infty
[\sigma]_{(n-1)/k}r^k
\]
and
\[
m(\sigma,n,\beta,r)=\sigma-(n-1)+2(1-\beta)\sum_{k=1}^\infty
[\sigma]_{(n-1)/k}(-r)^k
\]
with
\[
[\sigma]_{(-1)/k}=\frac{\sigma+k+1}{\sigma+1}.
\]
Then
\[
m(\sigma,n,\beta,r)\leq\left|(\sigma-n)\frac{f(z)}{z}+f^{\prime}(z)\right|\leq
M(\sigma,n,\beta,r).
\]
The inequalities are sharp.
\end{theorem}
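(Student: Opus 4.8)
The plan is to reduce the asserted bounds to the growth estimates of Theorem 3 by recognising the differential combination $(\sigma-n)\frac{f(z)}{z}+f'(z)$ as a multiple of the preceding iterate $p_{\sigma,n-1}$. By Lemma 2, the hypothesis $f\in B_n^\sigma(\beta)$ is equivalent to saying that $p_{\sigma,n}(z):=(f(z)/z-\beta)/(1-\beta)$ lies in $P_n^\sigma$, whence $f(z)=\beta z+(1-\beta)z\,p_{\sigma,n}(z)$. The first step is to put Definition 5 into differential form: multiplying \eqref{3} by $z^{\sigma-(n-1)}$ removes the denominator, and differentiating the resulting identity clears the integral to give
\[
z\,p_{\sigma,n}'(z)+(\sigma-(n-1))\,p_{\sigma,n}(z)=(\sigma-(n-1))\,p_{\sigma,n-1}(z).
\]

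Next I would differentiate $f(z)=\beta z+(1-\beta)z\,p_{\sigma,n}(z)$ to obtain $f'(z)=\beta+(1-\beta)\bigl[p_{\sigma,n}(z)+z\,p_{\sigma,n}'(z)\bigr]$ and then assemble the target combination. Collecting the $\beta$-terms into $\beta(\sigma-(n-1))$ and the $(1-\beta)$-terms into $(1-\beta)\bigl[(\sigma-(n-1))p_{\sigma,n}(z)+z\,p_{\sigma,n}'(z)\bigr]$, the bracket is exactly the left side of the differential recurrence, so it collapses to $(\sigma-(n-1))p_{\sigma,n-1}(z)$. This yields the key identity
\[
(\sigma-n)\frac{f(z)}{z}+f'(z)=(\sigma-(n-1))\bigl[\beta+(1-\beta)\,p_{\sigma,n-1}(z)\bigr],
\]
which turns the problem into a growth estimate for the element $p_{\sigma,n-1}\in P_{n-1}^\sigma$.

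With the identity in hand the bounds drop out of Theorem 3 applied with $n$ replaced by $n-1$. Writing $\beta+(1-\beta)p_{\sigma,n-1}(z)=1+(1-\beta)\sum_{k=1}^\infty c_{n-1,k}^\sigma z^k$ and estimating termwise by $|c_{n-1,k}^\sigma|\le 2[\sigma]_{(n-1)/k}$ from \eqref{7} produces the upper modulus bound $M(\sigma,n,\beta,r)$, while the lower bound $m(\sigma,n,\beta,r)$ follows from $|w|\ge Re\,w$ together with part (b) of Theorem 3, which supplies the alternating series $\sum[\sigma]_{(n-1)/k}(-r)^k$. Sharpness is inherited directly: taking $p_{\sigma,n-1}(z)=L_{\sigma,n-1}(z)$ and $L_{\sigma,n-1}(-z)$ forces equality in Theorem 3, and these pull back through the identity to the extremal functions \eqref{10} and \eqref{13}.

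The step I expect to be the main obstacle is the differential-recurrence manipulation behind the key identity, since this is precisely where the two-parameter structure enters and where a misplaced factor of $\sigma-(n-1)$ would spoil both the constant term and every coefficient of $M$ and $m$. A secondary technical point is that the formula must be legible at the boundary index $n-1=-1$, i.e. when $B_n^\sigma(\beta)$ is read with $n=0$: there one must verify that the prescribed value $[\sigma]_{(-1)/k}=(\sigma+k+1)/(\sigma+1)$ is the correct reading of $(\sigma-(m-1))_m/(\sigma+k-(m-1))_m$ at $m=-1$ under the Pochhammer convention $(\sigma)_{-1}=1/(\sigma-1)$, so that Theorem 3 may be invoked with the shifted index without ambiguity.
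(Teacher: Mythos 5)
Your proposal is correct and follows essentially the same route as the paper: both derive the key identity $(\sigma-n)f(z)/z+f'(z)=(\sigma-(n-1))[\beta+(1-\beta)p_{\sigma,n-1}(z)]$ from the differential form of the recurrence \eqref{3}, and then invoke Theorem 3 at index $n-1$ for the upper bound, $|w|\ge Re\,w$ for the lower bound, and the extremal functions \eqref{10} and \eqref{13} for sharpness. Your closing remark verifying $[\sigma]_{(-1)/k}=(\sigma+k+1)/(\sigma+1)$ against the Pochhammer convention is a detail the paper simply postulates, but it changes nothing in the argument.
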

\begin{proof}
Since $f\in B_n^\sigma(\beta)$, by Lemma 2, there exists
$p_{\sigma,n}\in P_n^\sigma$ such that
\begin{equation}
f(z)=z[\beta+(1-\beta)p_{\sigma,n}(z)]\,. \label{14}
\end{equation}
Hence we have
\begin{equation}
f^{\prime}(z)=\beta+(1-\beta)[p_{\sigma,n}(z)+zp^{\prime}_{\sigma,n}(z)]\,.
\label{15}
\end{equation}
From ~(\ref{14}) and ~(\ref{15}) we get
\begin{equation}
(\sigma-n)\frac{f(z)}{z}+f^{\prime}(z)=(\sigma-(n-1))\beta+(1-\beta)[(\sigma-(n-1))p_{\sigma,n}+zp^{\prime}_{\sigma,n}]\,.
\label{16}
\end{equation}
However we find from ~(\ref{3}) that
\[
(\sigma-(n-1))p_{\sigma,n}(z)+zp^{\prime}_{\sigma,n}(z)=(\sigma-(n-1))p_{\sigma,n-1}(z)
\]
so that ~(\ref{16}) becomes
\[
(\sigma-n)\frac{f(z)}{z}+f^{\prime}(z)=(\sigma-(n-1))[\beta+(1-\beta)p_{\sigma,n-1}].
\]
Therefore by Theorem 3, we get
\begin{equation}
\left|(\sigma-n)\frac{f(z)}{z}+f^{\prime}(z)\right|\leq
\sigma-(n-1)+2(1-\beta)\sum_{k=1}^\infty [\sigma]_{(n-1)/k}r^k\,
\label{17}
\end{equation}
and
\begin{equation}
Re\left\{(\sigma-n)\frac{f(z)}{z}+f^{\prime}(z)\right\}\geq
\sigma-(n-1)+2(1-\beta)\sum_{k=1}^\infty
[\sigma]_{(n-1)/k}(-r)^k\,. \label{18}
\end{equation}
The inequalities now follow from ~(\ref{17}) and ~(\ref{18}).
Upper bound equality is realized for the function $f(z)$ given by
~(\ref{10}) while equality in the lower bound equality is attained
for the function $f(z)$ defined by ~(\ref{13}).
\end{proof}
Finally we prove
\begin{theorem}
$B_n^\sigma(\beta)$ is a convex family of analytic and univalent
functions.
\end{theorem}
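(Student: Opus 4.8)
The plan is to separate the statement into its two assertions—univalence and convexity—and dispose of each by invoking what has already been proved. The univalence of every member of $B_n^\sigma(\beta)$ (for $n\geq 1$) is exactly the content of Corollary 3, so I would simply cite it and add nothing new there. The genuine content is the convexity of the family, and for this I intend to transport the problem to the family $P_n^\sigma$, whose convexity is already established in Theorem 4, using the equivalence recorded in Lemma 2.

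Concretely, I would fix $f,g\in B_n^\sigma(\beta)$ and nonnegative reals $\mu_1,\mu_2$ with $\mu_1+\mu_2=1$, and set $h=\mu_1 f+\mu_2 g$. By Lemma 2(iii) the normalized functions $p:=(f(z)/z-\beta)/(1-\beta)$ and $q:=(g(z)/z-\beta)/(1-\beta)$ both lie in $P_n^\sigma$. The crucial observation is that, because $\mu_1+\mu_2=1$, the additive constant $\beta$ is absorbed correctly under convex combination, so that
\[
\frac{h(z)/z-\beta}{1-\beta}=\mu_1\,\frac{f(z)/z-\beta}{1-\beta}+\mu_2\,\frac{g(z)/z-\beta}{1-\beta}=\mu_1 p+\mu_2 q.
\]
Since $P_n^\sigma$ is a convex set by Theorem 4, the right-hand side belongs to $P_n^\sigma$, and applying Lemma 2(iii) in the reverse direction yields $h\in B_n^\sigma(\beta)$. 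Together with Corollary 3 this proves that $B_n^\sigma(\beta)$ is a convex family of analytic and univalent functions.

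There is essentially no obstacle to overcome: the entire argument is a transfer along the equivalence of Lemma 2 combined with the already-proved convexity of $P_n^\sigma$, and it runs parallel to the proof of Theorem 4. The only point demanding even slight care is checking that the affine normalization $(\,\cdot/z-\beta)/(1-\beta)$ commutes with convex combinations, which holds precisely because $\mu_1+\mu_2=1$; and one must keep $n\geq 1$ so that Corollary 3 applies for the univalence claim. I note in passing that convexity admits an even shorter route bypassing $P_n^\sigma$: since $L_n^\sigma$ is a convolution operator and hence linear, one has $Re\,(L_n^\sigma h(z)/z)=\mu_1\,Re\,(L_n^\sigma f(z)/z)+\mu_2\,Re\,(L_n^\sigma g(z)/z)>\mu_1\beta+\mu_2\beta=\beta$, so $h\in B_n^\sigma(\beta)$ directly from the defining inequality~(\ref{2}). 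I would present the route through $P_n^\sigma$ to keep the argument uniform with the earlier convexity result.
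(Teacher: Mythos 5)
Your proposal is correct and follows essentially the same route as the paper: both transfer the problem to $P_n^\sigma$ via Lemma 2 (writing $f(z)=z[\beta+(1-\beta)p_{\sigma,n}(z)]$ is just the paper's phrasing of your normalization) and then invoke the convexity of $P_n^\sigma$ from Theorem 4. Your explicit appeal to Corollary 3 for univalence and the alternative argument via linearity of $L_n^\sigma$ are harmless additions, not a different method.
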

\begin{proof}
Let $f,\;g\in B_n^\sigma(\beta)$. Then by Lemma 2 there exists
$p_{\sigma,n},\;q_{\sigma,n}\in P_n^\sigma$ such that
\[
f(z)=z[\beta+(1-\beta)p_{\sigma,n}(z)]
\]
and
\[
g(z)=z[\beta+(1-\beta)q_{\sigma,n}(z)].
\]
Therefore for nonnegative real numbers $\mu_1$ and $\mu_2$ with
$\mu_1+\mu_2=1$, we have
\begin{multline*}
h(z)=\mu_1f(z)+\mu_2g(z)=z\mu_1[\beta+(1-\beta)p_{\sigma,n}(z)]+z\mu_2[\beta+(1-\beta)q_{\sigma,n}(z)]\\
=z[(\mu_1+\mu_2)\beta+(1-\beta)(\mu_1p_{\sigma,n}+\mu_2q_{\sigma,n}]\\
=z[\beta+(1-\beta)(\mu_1p_{\sigma,n}+\mu_2q_{\sigma,n}].
\end{multline*}
The proof completes with Theorem 4.
\end{proof}

\section{General Remarks}
The two-parameter integral iteration of the Caratheodory functions
presented in Section 2 of this paper has also proved very
resourceful in providing elegantly short proofs of many
fundamental results in the theory of analytic and univalent
functions. An earlier one presented in \cite{BO} closely relates
with certain classes of functions defined by the Salagean
derivative operators. This provides the motivation to search for
analogous iteration that will equivalently closely relate with
certain other classes of functions involving the Ruscheweyh
derivative, and this leads us to defining new operators
$L_n^\sigma\colon A\to A$, which includes the Ruscheweyh
derivative as a special case.

Finally we remark that the results presented in this work include
many earlier ones as particular cases.
 \medskip

{\it Acknowledgements.} The author is indebted to his colleague,
Fransesco Sarnari, a Doctoral Student of the Department of
Mathematics, University of Leeds, Leeds, United Kingdom, for his
tremendous assistance during the preparation of this work. {\em
"Arriving Gaza wouldn't have been more gracious"}.

\vspace{10pt}

\hspace{-4mm}{\small{Received}}

\vspace{-12pt}
\ \hfill \
\begin{tabular}{c}
{\small\em  Department of Mathematics}\\
{\small\em  University of Ilorin}\\
{\small\em  Ilorin, Nigeria}\\
{\small\em E-mail: {\tt abuuabdilqayyuum@gmail.com}} \\
\end{tabular}

\end{document}